
\documentclass{amsart}
\usepackage{amssymb}
\usepackage{amsfonts}

\setcounter{MaxMatrixCols}{10}

\newtheorem{theorem}{Theorem}
\theoremstyle{plain}

\newtheorem{lemma}{Lemma}

\numberwithin{equation}{section}
\input{tcilatex}

\begin{document}
\title{On some classes of Abel-Grassmann's groupoids}
\subjclass[2000]{20M10 and 20N99}
\author[M. Khan, Faisal, V. Amjid]{}
\maketitle

\begin{center}
$^{1}$\textbf{Madad Khan, }$^{2}$\textbf{Faisal and }$^{3}$\textbf{Venus
Amjid}

\bigskip

\textbf{Department of Mathematics}

\textbf{COMSATS Institute of Information Technology}

\textbf{Abbottabad, Pakistan}

\bigskip

$^{1}$\textbf{madadmath@yahoo.com, }$^{2}$\textbf{yousafzaimath@yahoo.com}

$^{3}$\textbf{venusmath@yahoo.com}
\end{center}

\bigskip

\textbf{Abstract.} In this paper, we have investigated different classes of
an AG-groupoid by their structural properties. We have prove that weakly
regular, intra-regular, right regular, left regular, left quasi regular and
completely regular coincide in an AG-groupoid with left identity and in AG$%
^{\ast \ast }$-groupoid. Further we have prove that every regular (weakly
regular, intra-regular, right regular, left regular, left quasi regular,
completely regular) AG-groupoid with left identity $($AG$^{\ast \ast }$%
-groupoid$)$ is regular but the converse is not true in general. Also it has
been shown that non-associative regular, weakly regular, intra-regular,
right regular, left regular, left quasi regular and completely regular AG$%
^{\ast }$ groupoids do not exist.

\bigskip

\textbf{Keywords}. An AG-groupoid, left invertive law, medial law and
paramedial law.

\begin{center}
\bigskip

{\LARGE Introduction}
\end{center}

\bigskip

Abel-Grassmann's groupoid (AG-groupoid) \cite{ref10}, is a groupoid $S$
holding the left invertive law for\ all $a,b,c\in S$%
\begin{equation}
(ab)c=(cb)a\text{.}  \tag{$1$}
\end{equation}%
In an AG-groupoid the medial law holds for\ all $a,b,c,d\in S$%
\begin{equation}
(ab)(cd)=(ac)(bd)\text{.}  \tag{$2$}
\end{equation}%
An AG-groupoid is a non-associative algebraic structure mid way between a
groupoid and a commutative semigroup with wide applications in theory of
flocks \cite{Naseeruddin}. This structure is closely related with a
commutative semigroup because if an AG-groupoid contains a right identity,
then it becomes a commutative semigroup \cite{Mus3}. There can be a unique
left identity in an AG-groupoid \cite{Mus3}.

In an AG-groupoid $S$ with left identity, the paramedial laws hold for\ all $%
a,b,c,d\in S$%
\begin{equation}
(ab)(cd)=(dc)(ba).  \tag{$3$}
\end{equation}%
Further if an AG-groupoid contain a left identity, the following law holds
for all $a,b,c\in S$%
\begin{equation}
a(bc)=b\left( ac\right) \text{.}  \tag{$4$}
\end{equation}

If an AG-groupoid satisfies $(4)$, then it becomes an AG$^{\ast \ast }$%
-groupoid. Thus in an AG$^{\ast \ast }$-groupoid$,$ $(3)$ and $(4)$ holds
without left identity.

An element $a$ of an AG-groupoid $S$ is called a regular if there exists $%
x\in S$ such that $a=(ax)a$ and $S$ is called regular if all elements of $S$
are regular.

An element $a$ of an AG-groupoid $S$ is called a weakly regular if there
exists $x,y\in S$ such that $a=(ax)(ay)$ and $S$ is called weakly regular if
all elements of $S$ are weakly regular.

An element $a$ of an AG-groupoid $S$ is called an intra-regular if there
exists $x,y\in S$ such that $a=(xa^{2})y$ and $S$ is called an intra-regular
if all elements of $S$ are intra-regular (see \cite{n}).

An element $a$ of an AG-groupoid $S$ is called a right regular if there
exists $x\in S$ such that $a=a^{2}x=(aa)x$ and $S$ is called a right regular
if all elements of $S$ are right regular.

An element $a$ of an AG-groupoid $S$ is called a left regular if there
exists $x\in S$ such that $a=xa^{2}=x(aa)$ and $S$ is called left regular if
all elements of $S$ are left regular.

An element $a$ of an AG-groupoid $S$ is called a left quasi regular if there
exists $x,y\in S$ such that $a=(xa)(ya)$ and $S$ is called left quasi
regular if all elements of $S$ are left quasi regular.

An element $a$ of an AG-groupoid $S$ is called a completely regular if $a$
is regular, left regular and right regular. $S$ is called completely regular
if it is regular, left and right regular.

\begin{lemma}
\label{ss}If $S$ is regular $($weakly regular, intra-regular, right regular,
left regular, left quasi regular, completely regular$)$ AG groupoid$,$ then $%
S=S^{2}.$
\end{lemma}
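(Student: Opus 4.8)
The inclusion $S^{2}\subseteq S$ is immediate from closure, so the entire content is the reverse inclusion $S\subseteq S^{2}$. The plan is to take an arbitrary element $a\in S$ and, in each of the seven cases, read off from the defining identity that $a$ is a product of two elements of $S$, hence lies in $S^{2}$.

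Concretely: if $S$ is regular, pick $x$ with $a=(ax)a$; since $ax\in S$ and $a\in S$ we get $a\in SS=S^{2}$. If $S$ is weakly regular, $a=(ax)(ay)$ with $ax,ay\in S$, so $a\in S^{2}$. If $S$ is intra-regular, $a=(xa^{2})y$ with $xa^{2}\in S$ and $y\in S$, so $a\in S^{2}$. If $S$ is right regular, $a=a^{2}x=(aa)x$ with $aa\in S$ and $x\in S$, so $a\in S^{2}$. If $S$ is left regular, $a=xa^{2}=x(aa)$ with $x,aa\in S$, so $a\in S^{2}$. If $S$ is left quasi regular, $a=(xa)(ya)$ with $xa,ya\in S$, so $a\in S^{2}$. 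Finally, if $S$ is completely regular it is in particular regular, so the regular case applies. In all cases $a\in S^{2}$, whence $S\subseteq S^{2}$, and together with $S^{2}\subseteq S$ this gives $S=S^{2}$.

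There is essentially no obstacle here: the lemma is a one-line observation in each case, requiring no appeal to the left invertive, medial, or paramedial laws — only the definitions of the regularity notions and closure of $S$ under multiplication. The only thing to be careful about is simply listing all seven cases so that the statement is verified as written; the completely regular case is folded into the regular one.
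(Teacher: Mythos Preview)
Your argument is correct and matches the paper's own proof: the paper shows $S^{2}\subseteq S$ is obvious, handles the regular case by writing $a=(ax)a\in SS=S^{2}$, and then dismisses the remaining cases with a ``similarly.'' You have simply spelled out each of those similar cases explicitly, which is fine.
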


\begin{proof}
Let $S$ be a regular AG-groupoid, then $S^{2}\subseteq S$ is obvious. Let $%
a\in S,$ then since $S$ is regular so there exists $x\in S$ such that $%
a=(ax)a.$ Now%
\begin{equation*}
a=(ax)a\in SS=S^{2}.
\end{equation*}

Similarly if $S$ is weakly regular, intra-regular, right regular, left
regular, left quasi regular and completely regular, then we can show that $%
S=S^{2}$.
\end{proof}

The converse is not true in general which can be followed from the following
example.

Let us consider an AG-groupoid $S=\left \{ a,b,c,d,e,f\right \} $ in the
following Caylay's table.

\begin{center}
\begin{tabular}{c|cccccc}
. & $a$ & $b$ & $c$ & $d$ & $e$ & $f$ \\ \hline
$a$ & $a$ & $a$ & $a$ & $a$ & $a$ & $a$ \\ 
$b$ & $a$ & $b$ & $b$ & $b$ & $b$ & $b$ \\ 
$c$ & $a$ & $b$ & $f$ & $f$ & $d$ & $f$ \\ 
$d$ & $a$ & $b$ & $f$ & $f$ & $c$ & $f$ \\ 
$e$ & $a$ & $b$ & $c$ & $d$ & $e$ & $f$ \\ 
$f$ & $a$ & $b$ & $f$ & $f$ & $f$ & $f$%
\end{tabular}
\end{center}

Note that $S=S^{2}$ but $S$ is not a regular (weakly regular, intra-regular,
right regular, left regular, left quasi regular, completely regular) because 
$d\in S$ is not regular, weakly regular, intra-regular, right regular, left
regular, left quasi regular and completely regular.

\begin{theorem}
If $S$ is an AG-groupoid with left identity $($AG$^{\ast \ast }$-groupoid$),$
then $S$ is an intra-regular if and only if for all $a\in S,$ $a=(xa)(az)$
holds for some $x,z\in S.$
\end{theorem}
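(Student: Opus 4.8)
The plan is to prove the two implications separately. I will carry out the argument in an AG-groupoid $S$ with left identity $e$; the AG$^{\ast\ast}$ case goes through in the same way, with $(3)$ and $(4)$ (which hold there without a left identity) taking over the role of those uses of $e$. Apart from $(1)$--$(4)$, the only real tool will be substituting a defining equation back into itself.

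\emph{Intra-regular $\Rightarrow$ the stated condition.} Fix $a\in S$ with $a=(xa^{2})y$. First rewrite $xa^{2}=x(aa)=a(xa)$ by $(4)$, so $a=(a(xa))y$, and then $(1)$ gives $a=(y(xa))a$; abbreviating $m=y(xa)$ this reads $a=(ma)a$. Now the key move: substitute this relation into its own rightmost factor and simplify,
\[
a=(ma)\big((ma)a\big)=(ma)\big((aa)m\big)=(ma)\big((ae)(am)\big)=(ma)\big(a((ae)m)\big),
\]
the four equalities using, respectively, substitution, $(1)$ applied to $(ma)a$, the medial law after writing $m=em$, and $(4)$ applied to $(ae)(am)$. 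The last expression is exactly $a=(xa)(az)$ with the new parameters $x=m$ and $z=(ae)m$.

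\emph{The stated condition $\Rightarrow$ intra-regular.} Suppose $a=(xa)(az)$. Using $xa=(ex)a=(ax)e$ by $(1)$, then the medial law together with $ez=z$, then $(1)$ twice more, one reduces $a=((ax)e)(az)$ successively to $((ax)a)z$, then $((aa)x)z$, then $(zx)a^{2}$; hence $S$ is left regular. It then remains to observe that a left-regular AG-groupoid with left identity is intra-regular: from $a=ca^{2}=c(aa)$ (with $c=zx$), substitute into the first inner factor to get $a=c\big((c(aa))a\big)$, apply $(1)$ to the inner bracket, and then $(4)$, obtaining $a=(a(aa))c^{2}$, which is the intra-regular form $a=(xa^{2})y$ with $x=a$, $y=c^{2}$.

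I expect the main obstacle to be the first implication. Straightforward pushing of $(xa^{2})y$ around with $(1)$--$(4)$ tends to land on the left-quasi-regular shape $(\,\cdot\,a)(\,\cdot\,a)$, or on expressions like $(ae)(az)$ whose leading factor refuses to become $(\,\cdot\,)a$; the way out is precisely the self-substitution $a=(ma)a\mapsto(ma)((ma)a)$, which manufactures a factor $(aa)m$ that \emph{can} be reshaped as $a((ae)m)$ by medial and $(4)$. The small lemma ``left regular $\Rightarrow$ intra-regular'' needed for the converse is a second, milder instance of the same self-substitution trick; everything else is routine rearrangement with $(1)$--$(4)$.
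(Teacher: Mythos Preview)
Your forward implication is correct and in fact cleaner than the route taken in the paper: reducing first to $a=(ma)a$ with $m=y(xa)$, then self-substituting and reshaping $(aa)m$ into $a((ae)m)$ via $(2)$ and $(4)$, lands directly on the desired form. (For the AG$^{\ast\ast}$ case without a left identity you should spell out the replacement of $m=em$ by $m=uv\in S^{2}$, using Lemma~\ref{ss}; then $(aa)(uv)=(au)(av)=a((au)v)$ works the same way.)

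The backward implication, however, contains a genuine gap. In your chain
\[
a=((ax)e)(az)\;\to\;((ax)a)z\;\to\;((aa)x)z\;\to\;(zx)a^{2},
\]
the middle step requires $(ax)a=(aa)x$, equivalently $(ax)a=(xa)a$ after one use of $(1)$. This is \emph{not} a consequence of $(1)$--$(4)$. A concrete counterexample is the AG-groupoid $(\mathbb{Z},\,\ast)$ with $a\ast b=b-a$: one checks $(1)$--$(4)$ all hold and $0$ is a left identity, yet $(a\ast x)\ast a=2a-x$ while $(a\ast a)\ast x=x$, which differ whenever $x\neq a$. So your reduction to left regularity breaks at exactly this point, and with it the short route ``condition $\Rightarrow$ left regular $\Rightarrow$ intra-regular'' collapses (your lemma ``left regular $\Rightarrow$ intra-regular'' is fine, but you never reach its hypothesis).

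For comparison, the paper does not pass through left regularity at all in this direction. It starts from $a=(xa)(az)=a((xa)z)$ via $(4)$, substitutes $a$ back in to get a product $((xa)z)((xa)z)$ acting on $a$, and then grinds with $(1)$--$(4)$ until the expression takes the shape $(ua^{2})v$. If you want to keep your overall strategy, you will need either a different (valid) manipulation to reach $a=ca^{2}$, or to abandon the detour through left regularity and argue intra-regularity directly as the paper does.
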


\begin{proof}
Let $S$ be an intra-regular AG-groupoid with left identity $($AG$^{\ast \ast
}$-groupoid$)$, then for any $a\in S$ there exists $x,y\in S$ such that $%
a=(xa^{2})y.$ Now by using Lemma \ref{ss}, $y=uv$ for some $u,v\in S$. Let $%
vu=t$, $yt=s,$ $xs=w$ and $wa=z$ for some $t,s,w,z\in S.$ Thus by using $(4)$%
, $(1)$ and $(3)$, we have%
\begin{eqnarray*}
a &=&(xa^{2})y=(x(aa))y=(a(xa))y=(y(xa))a=(y(xa))((xa^{2})y) \\
&=&((uv)(xa))((xa^{2})y)=((ax)(vu))((xa^{2})y)=((ax)t)((xa^{2})y) \\
&=&(((xa^{2})y)t)(ax)=((ty)(xa^{2}))(ax)=((a^{2}x)(yt))(ax) \\
&=&((a^{2}x)s)(ax)=((sx)(aa))(ax)=((aa)(xs))(ax) \\
&=&((aa)w)(ax)=((wa)a)(ax)=(za)(ax)=(xa)(az).
\end{eqnarray*}

Conversely, let for all $a\in S,$ $a=(xa)(az)$ holds for some $x,z\in S.$
Now by using $(4)$, $(1)$, $(2)$ and $(3)$, we have%
\begin{eqnarray*}
a &=&(xa)(az)=a((xa)z)=((xa)(az))((xa)z)=(a((xa)z))((xa)z) \\
&=&(((xa)z)((xa)z))a=(((xa)(xa))(zz))a=(((ax)(ax))(zz))a \\
&=&((a((ax)x))(zz))a=(((zz)((ax)x))a)a=((z^{2}((ax)x))a)a \\
&=&(((ax)(z^{2}x))a)a=((((z^{2}x)x)a)a)a=(((x^{2}z^{2})a)a)a \\
&=&(a^{2}(x^{2}z^{2}))a=(a(x^{2}z^{2}))(aa)=(at)(aa).
\end{eqnarray*}

Where $x^{2}z^{2}=t$ for some $t\in S.$ Now using $(3)$ and $(1)$, we have%
\begin{eqnarray*}
a &=&(at)(aa)=(((at)(aa))t)(aa)=(((aa)(ta))t)(aa) \\
&=&((a^{2}(ta))t)(aa)=((t(ta))a^{2})(aa)=(ua^{2})v,
\end{eqnarray*}

where $t(ta)=u$ and $aa=v$ for some $u,v\in S$. Thus $S$ is intra-regular.
\end{proof}

\begin{theorem}
\label{lop}If $S$ is an AG-groupoid with left identity $($AG$^{\ast \ast }$%
-groupoid$),$ then the following are equivalent.

$(i)$ $S$ is weakly regular.

$(ii)$ $S$ is intra-regular.
\end{theorem}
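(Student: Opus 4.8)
The plan is to prove both implications directly, by element-wise computation with the identities $(1)$--$(4)$ and Lemma \ref{ss}; I do not expect to need the characterization $a=(xa)(az)$ from the preceding theorem, although it would provide an alternative route for one of the directions.

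For $(i)\Rightarrow(ii)$: assume $S$ is weakly regular, say $a=(ax)(ay)$ with $x,y\in S$. First I would collapse the right-hand side by the medial law $(2)$ to $a=(aa)(xy)$ and, writing $t=xy$, record this as $a=(aa)t$; one application of the left invertive law $(1)$ then gives $a=(ta)a$. The key idea is to substitute this equation into itself: replacing the leading $a$ on the right by $(ta)a$ produces $a=\bigl(t((ta)a)\bigr)a$, and here identity $(4)$ converts $t((ta)a)$ into $(ta)(ta)$. A final use of the medial law yields $a=(t^{2}a^{2})a$, i.e.\ the intra-regular form $a=(ua^{2})v$ with $u=t^{2}$ and $v=a$.

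For $(ii)\Rightarrow(i)$: assume $S$ is intra-regular, say $a=(xa^{2})y$ with $x,y\in S$. Now I would invoke Lemma \ref{ss} to write $y=y_{1}y_{2}$, so that $a=(xa^{2})(y_{1}y_{2})$; the medial law $(2)$ redistributes this as $a=(xy_{1})(a^{2}y_{2})$, and the left invertive law $(1)$ rewrites $a^{2}y_{2}$ as $(y_{2}a)a$. Applying $(4)$ to the resulting product gives $a=(y_{2}a)\bigl((xy_{1})a\bigr)$, after which the paramedial law $(3)$ rearranges the two factors into $a=\bigl(a(xy_{1})\bigr)(ay_{2})$, which is precisely the weakly regular form $a=(au)(av)$ with $u=xy_{1}$ and $v=y_{2}$.

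Both directions are short once the correct move is spotted, so the only real obstacle is a bookkeeping one: in $(i)\Rightarrow(ii)$, choosing the place to re-substitute the regularity equation into itself so that $(4)$ becomes applicable, and in $(ii)\Rightarrow(i)$, deciding to split $y$ via $S=S^{2}$ at the outset. After those choices the identities $(1)$--$(4)$ apply in an essentially forced order and no further subtlety arises.
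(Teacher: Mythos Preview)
Your argument is correct in both directions. The $(ii)\Rightarrow(i)$ half is exactly what the paper has in mind (the paper only says ``by using Lemma \ref{ss}, $(2)$, $(4)$, $(1)$ and $(3)$ we can show that $a=(ax)(ay)$'' without writing the steps; you have supplied them in the natural order). The $(i)\Rightarrow(ii)$ half, however, differs from the paper's proof. The paper begins by decomposing $x=uv$ via $S=S^{2}$ and then runs the chain
\[
(ax)(ay)=(ya)(xa)=(ya)((uv)a)=(ya)((av)u)=(av)((ya)u)=(a(ya))(vu)=(y(aa))t=(ya^{2})t,
\]
using $(3)$, $(1)$, $(4)$, $(2)$, $(4)$ in turn. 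Your route avoids the decomposition of $x$ altogether: you collapse $(ax)(ay)$ straight to $(aa)t$ by $(2)$, rewrite it as $(ta)a$ by $(1)$, and then the self-substitution combined with $(4)$ and $(2)$ gives $(t^{2}a^{2})a$. This is shorter and uses Lemma \ref{ss} nowhere in this direction, which is a small conceptual gain; the paper's version, on the other hand, lands on the form $(ya^{2})t$ with both outer parameters genuinely distinct from $a$, whereas yours produces $v=a$. Either form satisfies the definition of intra-regularity, so both arguments are complete.
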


\begin{proof}
$(i)\Longrightarrow (ii)$ Let $S$ be a weakly regular AG-groupoid with left
identity $($AG$^{\ast \ast }$-groupoid$)$, then for any $a\in S$ there
exists $x,y\in S$ such that $a=(ax)(ay)$ and by Lemma \ref{ss}, $x=uv$ for
some $u,v\in S$. Let $vu=t\in S$, then by using $(3),$ $(1),$ $(4)$ and $%
(2), $ we have%
\begin{eqnarray*}
a &=&(ax)(ay)=(ya)(xa)=(ya)((uv)a)=(ya)((av)u) \\
&=&(av)((ya)u)=(a(ya))(vu)=(a(ya))t \\
&=&(y(aa))t=(ya^{2})t.
\end{eqnarray*}%
Thus $S$ is an intra-regular.

$(ii)\Longrightarrow (i)$ Let $S$ be an intra-regular AG-groupoid with left
identity $($AG$^{\ast \ast }$-groupoid$)$, then for any $a\in S$ there
exists $y,t\in S$ such that $a=(ya^{2})t.$ Now by using Lemma \ref{ss}, $%
(2), $ $(4),$ $(1)$ and $(3),$ we can show that $a=(ax)(ay)$ for some $%
x,y\in S.$ Thus $S$ is weakly regular.
\end{proof}

\begin{lemma}
\label{ki}If $S$ is an AG-groupoid $($AG$^{\ast \ast }$-groupoid$),$ then
the following are equivalent.

$(i)$ $S$ is weakly regular.

$(ii)$ $S$ is right regular.
\end{lemma}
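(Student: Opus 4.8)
The plan is to derive both implications directly from the medial law $(2)$, which holds in every AG-groupoid, invoking Lemma \ref{ss} only to supply a single factorization in one of the two directions. The point is that the identity $(ap)(aq)=(aa)(pq)=a^{2}(pq)$, an instance of $(2)$, is exactly the bridge between the weakly regular and right regular conditions.

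For $(i)\Longrightarrow(ii)$ I would take $S$ weakly regular, fix $a\in S$, and write $a=(ax)(ay)$ for suitable $x,y\in S$. Applying $(2)$ with $a,a$ in the first slots of the two factors gives $a=(ax)(ay)=(aa)(xy)=a^{2}(xy)$, so that $a=a^{2}z$ with $z=xy\in S$; hence $a$ is right regular, and since $a$ was arbitrary, $S$ is right regular.

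For $(ii)\Longrightarrow(i)$ I would take $S$ right regular, fix $a\in S$, and write $a=a^{2}x=(aa)x$ for some $x\in S$. By Lemma \ref{ss} we have $S=S^{2}$, so $x=pq$ for some $p,q\in S$. Then, again by $(2)$, $a=(aa)(pq)=(ap)(aq)$, which shows that $a$ is weakly regular; as $a$ was arbitrary, $S$ is weakly regular.

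I do not expect a genuine obstacle here: the medial law does essentially all of the work, and the only step requiring any care is the converse direction, where one needs $S=S^{2}$ (supplied by Lemma \ref{ss}) in order to split $x$ as a product $pq$ and thereby cast $a^{2}x$ into the form $(ap)(aq)$. It is worth remarking that the whole argument uses only $(1)$ is not even needed, just $(2)$ and Lemma \ref{ss}, so neither a left identity nor the laws $(3)$, $(4)$ enter; the equivalence therefore holds for an arbitrary AG-groupoid, and in particular for an AG$^{\ast\ast}$-groupoid.
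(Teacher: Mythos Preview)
Your proof is correct and follows essentially the same route as the paper: both directions rest solely on the medial law $(2)$, with Lemma \ref{ss} invoked in $(ii)\Rightarrow(i)$ to factor $x=pq$ and rewrite $a^{2}x=(aa)(pq)=(ap)(aq)$. The paper merely states that $(ii)\Rightarrow(i)$ ``follows from Lemma \ref{ss} and $(2)$'' without writing out the details you supply, so your argument is a faithful expansion of theirs; note only a small slip of phrasing in your final paragraph (``uses only $(1)$ is not even needed'' should read that $(1)$ is not needed and only $(2)$ is used).
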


\begin{proof}
$(i)\Longrightarrow (ii)$ Let $S$ be a weakly regular AG-groupoid $($AG$%
^{\ast \ast }$-groupoid$)$, then for any $a\in S$ there exists $x,y\in S$
such that $a=(ax)(ay)$ and let $xy=t$ for some $t\in S$. Now by using $(2),$
we have%
\begin{equation*}
a=(ax)(ay)=(aa)(xy)=a^{2}t.
\end{equation*}

Thus $S$ is right regular.

$(ii)\Longrightarrow (i)$ It follows from Lemma \ref{ss} and $(2)$.
\end{proof}

\begin{lemma}
\label{li}If $S$ is an AG-groupoid with left identity $($AG$^{\ast \ast }$%
-groupoid$),$ then the following are equivalent.

$(i)$ $S$ is weakly regular.

$(ii)$ $S$ is left regular.
\end{lemma}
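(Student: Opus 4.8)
The plan is to establish the two implications separately; each should be a short rearrangement using the three identities at our disposal — the left invertive law $(1)$, the medial law $(2)$, and, decisively, the paramedial law $(3)$, which is available here precisely because $S$ has a left identity (respectively, is an AG$^{\ast\ast}$-groupoid). The guiding observation is that the paramedial law is exactly the device that converts a ``left-handed'' occurrence of $a$ (as in $(ax)(ay)$) into a ``right-handed'' one (as in $xa^{2}$), and conversely; so each direction should take only one or two lines.

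For $(i)\Rightarrow(ii)$ I would take $a\in S$ and, by weak regularity, $x,y\in S$ with $a=(ax)(ay)$. Applying $(3)$ to the right-hand side gives $a=(ya)(xa)$, and a single application of $(2)$ collapses this to $a=(yx)(aa)=(yx)a^{2}$; thus $a$ is left regular, and hence $S$ is left regular. Alternatively, one can quote Lemma~\ref{ki} to pass from weak regularity to right regularity, i.e. $a=a^{2}t$, and then observe $a=(aa)t=(ta)a=(ta)(ea)=(te)(aa)=(te)a^{2}$, using $ea=a$ together with $(2)$; this again yields left regularity.

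For $(ii)\Rightarrow(i)$ I would first invoke Lemma~\ref{ss}: a left regular AG-groupoid satisfies $S=S^{2}$. Given $a\in S$, choose $x$ with $a=xa^{2}$, and then, since $S=S^{2}$, factor $x=uv$ with $u,v\in S$. Now $a=(uv)(aa)$, so applying $(3)$ yields $a=(aa)(vu)$, after which $(2)$ gives $a=(av)(au)$ — exactly the shape witnessing weak regularity, whence $S$ is weakly regular. I do not anticipate a genuine obstacle in either direction: the only point that needs a moment's thought is that the backward implication requires the regularity witness $x$ to be a product, which is why Lemma~\ref{ss} is brought in; without it one would have to push the identities further to manufacture the second factor.
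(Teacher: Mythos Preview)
Your proposal is correct and matches the paper's own argument almost verbatim: the paper's forward direction computes $(ax)(ay)=(aa)(xy)=(yx)(aa)=ta^{2}$ using $(2)$ then $(3)$, while you apply $(3)$ then $(2)$ to reach the same expression $(yx)a^{2}$; and for the converse the paper simply writes ``It follows from Lemma~\ref{ss}, $(3)$ and $(2)$'', which is exactly the factorisation $x=uv$ and the chain $(uv)(aa)=(aa)(vu)=(av)(au)$ that you spell out. Your alternative route via Lemma~\ref{ki} and the left identity $e$ is fine for the left-identity case but is not needed, since your primary argument already covers both hypotheses uniformly.
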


\begin{proof}
$(i)\Longrightarrow (ii)$ Let $S$ be a weakly regular AG-groupoid with left
identity $($AG$^{\ast \ast }$-groupoid$)$, then for any $a\in S$ there
exists $x,y\in S$ such that $a=(ax)(ay)$ and let $yx=t$ for some $t\in S$.
Now by using $(2)$ and $(3),$ we have%
\begin{equation*}
a=(ax)(ay)=(aa)(xy)=(yx)(aa)=(yx)a^{2}=ta^{2}.
\end{equation*}

Thus $S$ is left regular.

$(ii)\Longrightarrow (i)$ It follows from Lemma \ref{ss}, $(3)$ and $(2).$
\end{proof}

\begin{lemma}
\label{weak}Every weakly regular AG-groupoid with left identity $($AG$^{\ast
\ast }$-groupoid$)$ is regular.
\end{lemma}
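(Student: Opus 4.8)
The plan is to begin with the raw definition of weak regularity and massage it, using only the left invertive law $(1)$, the medial law $(2)$, the paramedial law $(3)$ and the left-identity identity $(4)$, until it takes exactly the shape $a=(ax)a$ required by regularity.

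First I would extract a convenient starting point. If $a\in S$ then $a=(ax)(ay)$ for some $x,y\in S$, and the medial law $(2)$ gives at once $a=(ax)(ay)=(aa)(xy)=a^{2}t$ with $t=xy\in S$ — this is just the implication $(i)\Rightarrow(ii)$ of Lemma \ref{ki}. Applying $(1)$ to $a=(aa)t$ yields $a=(ta)a$. So the genuine work is to pass from an expression $a=(ta)a$, in which the factor multiplying $a$ sits on the "wrong" side, to one of the form $a=(ax)a$. Since $S$ need not be commutative we cannot simply transpose $ta$, and this transposition is the main obstacle.

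The trick I would use is self-substitution: replace the inner occurrence of $a$ in $a=(ta)a$ by $(ta)a$ itself, obtaining $a=(t((ta)a))a$, and then collapse $t((ta)a)=(ta)(ta)$ by $(4)$. This manufactures a genuine square inside, $a=((ta)(ta))a$, and now the medial and paramedial laws can shuffle the letters: $(ta)(ta)=t^{2}a^{2}=a^{2}t^{2}=(at)(at)$, using $(2)$, then $(3)$, then $(2)$ again. A final application of $(4)$ pulls an $a$ to the front, $(at)(at)=a((at)t)$, so that $a=(a((at)t))a$; taking $x=(at)t\in S$ exhibits $a$ as regular, and since $a$ was arbitrary, $S$ is regular.

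I expect the only delicate point to be bookkeeping — keeping straight which copy of $a$ is rewritten at the self-substitution step and verifying that each subsequent rearrangement is a legitimate instance of $(1)$–$(4)$ with the brackets in the right places; everything else is routine. (One could alternatively reach the starting point $a=(ta)a$ by combining Theorem \ref{lop} with the intra-regularity characterisation proved above and then with $(1)$, but routing through Lemma \ref{ki} is shortest.)
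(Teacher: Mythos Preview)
Your proof is correct and follows essentially the same strategy as the paper's: both reach $a=(ta)a$ with $t=xy$, then self-substitute the inner $a$ and use $(2)$, $(3)$, $(4)$ to pull an $a$ to the front of the bracketed factor. The only cosmetic differences are that the paper obtains $a=(ta)a$ directly via two applications of $(1)$ rather than via $a=a^{2}t$, and it substitutes the original form $a=(ax)(ay)$ (arriving at $a=(au)a$ with $u=t((yx)a)$) whereas you substitute the derived form $a=(ta)a$; the resulting shuffles differ in detail but not in spirit.
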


\begin{proof}
Assume that $S$ is a weakly regular AG-groupoid with left identity $($AG$%
^{\ast \ast }$-groupoid$)$, then for any $a\in S$ there exists $x,y\in S$
such that $a=(ax)(ay).$ Let $xy=t\in S$ and $t((yx)a)=u\in S.$ Now by using $%
(1)$, $(2),\,(3)$ and $(4)$, we have%
\begin{eqnarray*}
a &=&(ax)(ay)=((ay)x)a=((xy)a)a=(ta)a=(t((ax)(ay)))a \\
&=&(t((aa)(xy)))a=(t((yx)(aa)))a=(t(a((yx)a)))a \\
&=&(a(t((yx)a)))a=(au)a.
\end{eqnarray*}

Thus $S$ is regular.

The converse is not true in general. For this, assume that $S$ is a regular
AG-groupoid $($AG$^{\ast \ast }$-groupoid$)$, then for every $a\in S$ there
exists $x\in S$ such that $a=(ax)a.$ Let us suppose that $S$ is also weakly
regular, then for every $a\in S$ there exists $x,y\in S$ such that $%
a=(ax)(ay).$ Thus%
\begin{equation*}
(ax)a=(ax)(ay).
\end{equation*}

Therefore%
\begin{equation}
a=ay.  \tag{5}
\end{equation}

Now there are two possibilities for $(5)$ to be hold. Either $S$ contains a
right identity or $S$ is idempotent and in both cases $S$ becomes a
commutative semigroup which is contrary to the given fact.
\end{proof}

\begin{theorem}
\label{io}If $S$ is an AG-groupoid with left identity $($AG$^{\ast \ast }$%
-groupoid$),$ then the following are equivalent.

$(i)$ $S$ is weakly regular.

$(ii)$ $S$ is completely regular.
\end{theorem}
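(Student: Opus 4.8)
The plan is to obtain both implications purely by assembling the lemmas already established, since each of them has been proved precisely under the hypothesis ``$S$ is an AG-groupoid with left identity (AG$^{\ast\ast}$-groupoid)'', which is exactly the standing assumption here.

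For $(i)\Longrightarrow(ii)$, suppose $S$ is weakly regular. First I would invoke Lemma \ref{weak} to conclude that $S$ is regular. Next, applying the implication $(i)\Longrightarrow(ii)$ of Lemma \ref{li} gives that $S$ is left regular, and applying the implication $(i)\Longrightarrow(ii)$ of Lemma \ref{ki} gives that $S$ is right regular. Since ``completely regular'' is by definition the conjunction of regular, left regular and right regular, it follows immediately that $S$ is completely regular.

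For $(ii)\Longrightarrow(i)$, suppose $S$ is completely regular. Then in particular $S$ is right regular, so the implication $(ii)\Longrightarrow(i)$ of Lemma \ref{ki} yields that $S$ is weakly regular, completing the equivalence. (One could equally well use that $S$ is left regular together with the converse direction of Lemma \ref{li}, or that $S$ is regular; the right-regular route is the shortest.)

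There is essentially no genuine obstacle in this argument: the only thing to be careful about is that each cited result is being used in the correct direction and under its stated hypothesis — all of Lemmas \ref{ss}, \ref{ki}, \ref{li}, \ref{weak} and Theorem \ref{lop} assume a left identity or the AG$^{\ast\ast}$ property, which we have here, so every step is legitimate. If a self-contained computation were preferred, one could instead start from $a=(ax)(ay)$ and directly produce the three witnesses for regularity, left regularity and right regularity by the same medial/paramedial manipulations used in Lemmas \ref{ki}, \ref{li} and \ref{weak}, but this would only duplicate work already done.
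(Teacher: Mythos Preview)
Your argument is correct and matches the paper's proof essentially line for line: for $(i)\Rightarrow(ii)$ the paper also cites Lemmas~\ref{ki}, \ref{li} and \ref{weak}, and for $(ii)\Rightarrow(i)$ the paper cites Lemma~\ref{li} where you cite Lemma~\ref{ki}, which is an immaterial difference you yourself note. One small slip in your parenthetical remark: ``or that $S$ is regular'' is not a valid alternative, since Lemma~\ref{weak} only goes from weakly regular to regular and the paper explicitly states that the converse fails in general.
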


\begin{proof}
$(i)\Longrightarrow (ii)$ It follows from Lemmas \ref{ki}, \ref{li} and \ref%
{weak}.

$(ii)\Longrightarrow (i)$ It follows from Lemma \ref{li}.
\end{proof}

\begin{lemma}
\label{lft}If $S$ is an AG-groupoid with left identity $($AG$^{\ast \ast }$%
-groupoid$),$ then the following are equivalent.

$(i)$ $S$ is weakly regular.

$(ii)$ $S$ is left quasi regular.
\end{lemma}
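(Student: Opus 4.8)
The plan is to route both implications through left regularity, exploiting Lemma \ref{li} (which equates weak regularity with left regularity), Lemma \ref{ss} (which gives $S=S^2$), and a single application of the medial law $(2)$ in each direction; this is in the same spirit as the proofs of Lemmas \ref{ki} and \ref{li}.

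For $(i)\Longrightarrow(ii)$, I would start from the hypothesis that $S$ is weakly regular and invoke Lemma \ref{li} to conclude that $S$ is left regular. Thus for a given $a\in S$ there is $t\in S$ with $a=ta^{2}=t(aa)$. By Lemma \ref{ss} we have $S=S^{2}$, so $t=uv$ for some $u,v\in S$, and then the medial law $(2)$ gives $a=(uv)(aa)=(ua)(va)$, which exhibits $a$ as left quasi regular. Hence $S$ is left quasi regular.

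For $(ii)\Longrightarrow(i)$, suppose $S$ is left quasi regular, so for $a\in S$ there are $x,y\in S$ with $a=(xa)(ya)$. Applying the medial law $(2)$ directly yields $a=(xy)(aa)=(xy)a^{2}$, so $S$ is left regular; by Lemma \ref{li} again, $S$ is weakly regular.

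Since each direction reduces to one use of $(2)$ together with a citation of Lemma \ref{li} (and, in the first direction, Lemma \ref{ss} to factor $t$), I do not expect a genuine obstacle here. The only points requiring a little care are remembering that the medial law holds in every AG-groupoid—so the left identity / AG$^{\ast\ast}$ hypothesis enters only through Lemma \ref{li}—and making sure the factorization $t=uv$ is legitimate, which it is precisely because $S=S^{2}$.
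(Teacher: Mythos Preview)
Your argument is correct: both directions go through cleanly, and the appeals to Lemma~\ref{li}, Lemma~\ref{ss}, and the medial law $(2)$ are all legitimate.

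That said, the paper disposes of this lemma with the single phrase ``It is simple,'' and the most natural reading of that is a one-line application of the paramedial law $(3)$ in each direction: from $a=(ax)(ay)$ one gets $a=(ya)(xa)$, and from $a=(xa)(ya)$ one gets $a=(ay)(ax)$. Your route via left regularity is correct and nicely parallels the proofs of Lemmas~\ref{ki} and~\ref{li}, but it is more circuitous---it invokes two auxiliary lemmas and a factorization where a single identity suffices. The direct paramedial argument also makes transparent that the equivalence of weak regularity and left quasi regularity needs only $(3)$, not the full strength of Lemma~\ref{li}; conversely, your approach has the minor expository benefit of fitting the lemma into the same template as its neighbours.
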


\begin{proof}
It is simple.
\end{proof}

\begin{lemma}
\label{kij}Every intra-regular AG-groupoid with left identity $($AG$^{\ast
\ast }$-groupoid$)$ is regular.
\end{lemma}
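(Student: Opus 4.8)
The plan is to derive this as a corollary of results already in hand rather than by a fresh computation. First I would invoke Theorem~\ref{lop}: in an AG-groupoid with left identity (equivalently, in an AG$^{\ast \ast }$-groupoid) intra-regularity and weak regularity coincide, so the hypothesis that $S$ is intra-regular immediately gives that $S$ is weakly regular. Then I would apply Lemma~\ref{weak}, which says that every weakly regular AG-groupoid with left identity (AG$^{\ast \ast }$-groupoid) is regular. Composing the two implications yields that $S$ is regular, which is exactly the assertion of the lemma.

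Alternatively, one can unwind these two results into a single direct argument. Starting from an element $a\in S$ with $a=(xa^{2})y$ for some $x,y\in S$, one first runs the manipulation from the proof of Theorem~\ref{lop} — using $S=S^{2}$ from Lemma~\ref{ss} to factor one of the witnesses, and applying the left invertive law, the medial law, the paramedial law and $(4)$ — to rewrite $a$ in weakly regular form $a=(ap)(aq)$ for suitable $p,q\in S$. One then follows the chain of equalities in the proof of Lemma~\ref{weak}: put $pq=t$ and $w=t((qp)a)$, and repeatedly apply $(1)$, $(2)$, $(3)$ and $(4)$ to obtain $a=(aw)a$, which exhibits $a$ as a regular element. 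Since $a$ was arbitrary, $S$ is regular.

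There is essentially no genuine obstacle here, as the statement is a formal consequence of Theorem~\ref{lop} and Lemma~\ref{weak}. The one point deserving a moment's care is that the standing hypothesis is literally the same in all three statements — an AG-groupoid with left identity, or an AG$^{\ast \ast }$-groupoid, which by the remarks in the introduction supplies exactly the laws $(3)$ and $(4)$ even in the absence of a left identity — so that the two cited results may legitimately be chained. Given this, the proof is immediate.
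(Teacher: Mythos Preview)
Your proposal is correct and matches the paper's own proof exactly: the paper simply says the lemma follows from Theorem~\ref{lop} and Lemma~\ref{weak}, which is precisely the chaining you describe. Your ``alternative'' unwinding is just the composition of those two proofs and adds nothing new, so the first paragraph already suffices.
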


\begin{proof}
It can be followed from Theorem \ref{lop} and Lemma \ref{weak}. The converse
is not true in general.
\end{proof}

\begin{lemma}
Every right regular AG-groupoid with left identity $($AG$^{\ast \ast }$%
-groupoid$)$ is regular.
\end{lemma}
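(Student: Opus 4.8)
The plan is to derive this as an immediate corollary of two earlier results rather than to compute directly. First I would apply Lemma \ref{ki}: in any AG-groupoid (and in particular in an AG$^{\ast\ast}$-groupoid) right regularity is equivalent to weak regularity. Hence if $S$ is a right regular AG-groupoid with left identity (AG$^{\ast\ast}$-groupoid), then $S$ is weakly regular. Next I would apply Lemma \ref{weak}, which states that every weakly regular AG-groupoid with left identity (AG$^{\ast\ast}$-groupoid) is regular. Composing the two implications gives that $S$ is regular, which is exactly the claim; the written proof is essentially the single chain ``$S$ right regular $\Longrightarrow$ $S$ weakly regular $\Longrightarrow$ $S$ regular.''

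If a self-contained argument is preferred, one can unwind the two lemmas: starting from $a = a^2x = (aa)x$ for some $x \in S$, use Lemma \ref{ss} to write $x = uv$, and then, exactly as in the proofs of Lemmas \ref{ki} and \ref{weak}, push the product through the medial law $(2)$, the paramedial law $(3)$, the left invertive law $(1)$ and identity $(4)$ until $a$ is expressed in the form $(a\,z)a$ for a suitable $z$ assembled from $u$, $v$ and $a$. The recurring manipulation is to bring a copy of $a$ to the outside of a bracketed product via $(1)$ while rewriting the inner factors with $(3)$ and $(4)$.

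I do not expect a genuine obstacle here: the statement is a straightforward consequence of Lemmas \ref{ki} and \ref{weak}, and the only point requiring attention is matching hypotheses, since Lemma \ref{ki} needs only the AG-groupoid (or AG$^{\ast\ast}$) structure while Lemma \ref{weak} uses the left identity (or AG$^{\ast\ast}$), and both are available. Finally, in keeping with the pattern of Lemmas \ref{weak} and \ref{kij}, I would close by noting that the converse fails in general: were a regular $S$ also right regular, equating the two representations of a generic element would force a cancellation-type identity that collapses $S$ to a commutative semigroup, contradicting non-associativity.
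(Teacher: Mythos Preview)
Your approach is exactly the paper's: it proves the lemma in one line by citing Lemmas \ref{ki} and \ref{weak}, and then remarks that the converse is not true in general. Your additional unwound argument and converse discussion are consistent elaborations, but the core chain ``right regular $\Rightarrow$ weakly regular $\Rightarrow$ regular'' matches the paper verbatim.
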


\begin{proof}
It can be followed from Lemmas \ref{ki} and \ref{weak}. The converse is not
true in general.
\end{proof}

\begin{lemma}
Every left regular AG-groupoid with left identity $($AG$^{\ast \ast }$%
-groupoid$)$ is regular.
\end{lemma}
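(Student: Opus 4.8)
The plan is to reduce the statement to results already in hand rather than to compute from scratch. By Lemma~\ref{li}, in an AG-groupoid with left identity (equivalently, in an AG$^{\ast\ast}$-groupoid) left regularity and weak regularity are equivalent; and by Lemma~\ref{weak} every weakly regular such groupoid is regular. Chaining these two facts yields the claim at once, so the ``proof'' is really just an application of $(ii)\Rightarrow(i)$ of Lemma~\ref{li} followed by Lemma~\ref{weak}.

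Concretely, I would start by letting $S$ be left regular, so for each $a\in S$ there is $x\in S$ with $a=xa^{2}$. Applying the direction $(ii)\Rightarrow(i)$ of Lemma~\ref{li} (which itself uses Lemma~\ref{ss}, $(3)$ and $(2)$), I get $x',y'\in S$ with $a=(ax')(ay')$, i.e. $S$ is weakly regular. Then I would invoke Lemma~\ref{weak}: setting $t=x'y'$ and $u=t((y'x')a)$ and using the left invertive law $(1)$, the medial law $(2)$, the paramedial law $(3)$ and identity $(4)$, the expression $a=(ax')(ay')$ rewrites as $a=(au)a$, exhibiting $a$ as a regular element. Since $a$ was arbitrary, $S$ is regular.

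If a self-contained computation were preferred, one can go directly from $a=xa^{2}$: using $S=S^{2}$ from Lemma~\ref{ss} to write $x$ as a product, and then pushing factors around with $(1)$--$(4)$ until $a$ reappears in the form $(ab)a$ for a suitable $b\in S$ — this is just the concatenation of the manipulations buried in Lemmas~\ref{li} and~\ref{weak} carried out in a single pass.

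There is essentially no obstacle. The only point that needs care is that every step remains valid in an AG$^{\ast\ast}$-groupoid, i.e. that we use only $(1)$, $(2)$, $(3)$, $(4)$ (all available in an AG$^{\ast\ast}$-groupoid without assuming a left identity) and never cancel elements or assume commutativity. As with the preceding lemmas, the converse is false in general, so no effort should be spent attempting to prove it.
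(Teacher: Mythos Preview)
Your proposal is correct and matches the paper's own proof essentially verbatim: the paper simply says the result follows from Lemmas~\ref{li} and~\ref{weak}, exactly the chain you describe, and likewise notes that the converse fails in general. Your added unpacking of those lemmas is faithful to their content and introduces no new assumptions.
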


\begin{proof}
It can be followed from Lemmas \ref{li} and \ref{weak}. The converse is not
true in general.
\end{proof}

\begin{lemma}
Every completely regular AG-groupoid with left identity $($AG$^{\ast \ast }$%
-groupoid$)$ is regular.
\end{lemma}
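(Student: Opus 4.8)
The plan is to deduce the statement immediately from the equivalences already in hand, so that essentially no fresh computation is needed. First I would observe that, by the very definition of complete regularity given above, a completely regular AG-groupoid is in particular regular, so at the level of the bare statement there is nothing to prove beyond unwinding the definition. To keep the exposition parallel with the neighbouring lemmas, however, I would instead route the argument through the weak-regularity characterisation: by Theorem \ref{io}, in an AG-groupoid with left identity $($AG$^{\ast \ast }$-groupoid$)$ complete regularity is equivalent to weak regularity, and by Lemma \ref{weak} every weakly regular such AG-groupoid is regular; chaining these two facts yields the claim.

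In more detail, the steps would be: (1) assume $S$ is a completely regular AG-groupoid with left identity $($or an AG$^{\ast \ast }$-groupoid$)$; (2) apply the implication $(ii)\Longrightarrow(i)$ of Theorem \ref{io} to conclude that $S$ is weakly regular; (3) apply Lemma \ref{weak} to conclude that $S$ is regular. As with the other lemmas of this type, I would also record that the converse fails, reusing the argument at the end of the proof of Lemma \ref{weak}: if a regular AG-groupoid were also left or right regular, comparing the two representations would force $a=ay$, hence $S$ would contain a right identity or be idempotent, and in either case $S$ collapses to a commutative semigroup, contrary to non-associativity.

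The only real obstacle is presentational: one must make sure the cited results $($Theorem \ref{io} and Lemma \ref{weak}, or equivalently Lemmas \ref{ki}, \ref{li} and \ref{weak}$)$ are available under both ambient hypotheses, an AG-groupoid with left identity and an AG$^{\ast \ast }$-groupoid; they are, since each of those statements was proved under exactly that pair of hypotheses. There is no genuine mathematical difficulty here, as the substantive content has already been absorbed into Lemma \ref{weak} and Theorem \ref{io}.
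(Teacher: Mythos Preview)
Your proposal is correct and matches the paper's own proof essentially verbatim: the paper simply says the result follows from Theorem \ref{io} and Lemma \ref{weak}, and notes that the converse is not true in general. Your additional remark that complete regularity already contains regularity by definition is valid and even shorter, but the route via Theorem \ref{io} and Lemma \ref{weak} is exactly what the authors do.
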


\begin{proof}
It can be followed from Theorem \ref{io} and Lemma \ref{weak}. The converse
is not true in general.
\end{proof}

\begin{lemma}
Every left quasi regular AG-groupoid with left identity $($AG$^{\ast \ast }$%
-groupoid$)$ is regular.
\end{lemma}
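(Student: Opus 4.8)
The plan is to follow exactly the template used for the preceding four lemmas of this section: reduce the hypothesis to weak regularity and then quote Lemma \ref{weak}. Since Lemma \ref{lft} already records that, in an AG-groupoid with left identity (AG$^{\ast\ast}$-groupoid), left quasi regularity and weak regularity are equivalent, a left quasi regular $S$ is in particular weakly regular, and Lemma \ref{weak} then yields that $S$ is regular. So the one-line proof I would write is: ``It can be followed from Lemmas \ref{lft} and \ref{weak}. The converse is not true in general.''

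If instead one wants a self-contained computation rather than a citation chain, there is an equally short route. Starting from $a=(xa)(ya)$ for some $x,y\in S$, the medial law $(2)$ gives $a=(xa)(ya)=(xy)(aa)=ta^{2}$ with $t=xy$, so $S$ is left regular; Lemma \ref{li} then upgrades this to weak regularity and Lemma \ref{weak} finishes. Note the first step needs nothing beyond $(2)$, which holds in every AG-groupoid, so left quasi regularity implies left regularity unconditionally, and only the last two steps use the left identity (AG$^{\ast\ast}$) hypothesis.

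I do not anticipate a genuine obstacle. The only point requiring a little care is that Lemma \ref{lft} is stated with its proof merely asserted to be ``simple''; if a referee wanted that equivalence made explicit, one would spell it out in the routine manner, deriving $a=(ax)(ay)$ from $a=(xa)(ya)$ (and conversely) by combining Lemma \ref{ss} with the laws $(2),(3),(4)$ exactly as in the proofs of Lemmas \ref{ki} and \ref{li}. I would also append the standard remark that the converse fails: if a regular AG-groupoid were forced to be left quasi regular, the argument at the end of Lemma \ref{weak} would again make $S$ contain a right identity or be idempotent, hence a commutative semigroup, contradicting non-associativity; a concrete counterexample of the same flavour as the table given earlier can be cited to witness this.
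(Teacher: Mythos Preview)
Your proposal is correct and matches the paper's own proof verbatim: the paper also writes ``It can be followed from Lemmas \ref{lft} and \ref{weak}. The converse is not true in general.'' Your alternative computational route via left regularity is a valid extra elaboration, but the citation chain you give first is exactly the paper's approach.
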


\begin{proof}
It can be followed from Lemmas \ref{lft} and \ref{weak}. The converse is not
true in general.
\end{proof}

\begin{theorem}
In an AG-groupoid $S$ with left identity $($AG$^{\ast \ast }$-groupoid$),$
the following are equivalent.

$(i)$ $S$ is weakly regular.

$(ii)$ $S$ is an intra-regular.

$(iii)$ $S$ is right regular.

$(iv)$ $S$ is left regular.

$(v)$ $S$ is left quasi regular.

$(vi)$ $S$ is completely regular.
\end{theorem}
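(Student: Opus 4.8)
The plan is to observe that the entire theorem is already assembled from the preceding results: each of the conditions $(ii)$ through $(vi)$ has been shown to be equivalent to $(i)$, so the six statements are pairwise equivalent by transitivity. Concretely, I would cite Theorem~\ref{lop} for $(i)\Longleftrightarrow(ii)$, Lemma~\ref{ki} for $(i)\Longleftrightarrow(iii)$, Lemma~\ref{li} for $(i)\Longleftrightarrow(iv)$, Lemma~\ref{lft} for $(i)\Longleftrightarrow(v)$, and Theorem~\ref{io} for $(i)\Longleftrightarrow(vi)$. Since all of $(ii),(iii),(iv),(v),(vi)$ are equivalent to the single anchor statement $(i)$, any two of the six are equivalent, which is exactly the claim.

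I would present it in the standard cyclic-or-star format. The cleanest write-up is simply: ``$(i)\Longleftrightarrow(ii)$ is Theorem~\ref{lop}; $(i)\Longleftrightarrow(iii)$ is Lemma~\ref{ki}; $(i)\Longleftrightarrow(iv)$ is Lemma~\ref{li}; $(i)\Longleftrightarrow(v)$ is Lemma~\ref{lft}; $(i)\Longleftrightarrow(vi)$ is Theorem~\ref{io}. Hence $(i)$--$(vi)$ are all equivalent.'' One small point worth checking is that Lemma~\ref{ki} is stated for an AG-groupoid (resp.\ AG$^{\ast\ast}$-groupoid) without assuming a left identity, while the other results assume a left identity; since the present theorem does assume a left identity (or the AG$^{\ast\ast}$ hypothesis), all five cited equivalences apply under the theorem's hypotheses with no loss, so the chaining is valid.

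There is essentially no obstacle here: the theorem is a summary/corollary of the section, and the only ``work'' is bookkeeping — making sure each pair is covered by a previously established biconditional and that the hypotheses match. No new computation with the left invertive, medial, paramedial, or $a(bc)=b(ac)$ laws is required beyond what was already carried out in Theorem~\ref{lop}, Lemmas~\ref{ki}, \ref{li}, \ref{li}, \ref{lft}, \ref{weak}, and Theorem~\ref{io}.
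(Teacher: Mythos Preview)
Your proposal is correct and essentially matches the paper's own proof: both establish the theorem purely by citing Theorem~\ref{lop}, Lemma~\ref{ki}, Lemma~\ref{li}, Lemma~\ref{lft}, and Theorem~\ref{io}, with no new computation. The only cosmetic difference is that the paper presents the implications in the cyclic order $(i)\Rightarrow(ii)\Rightarrow\cdots\Rightarrow(vi)\Rightarrow(i)$ (each intermediate step passing through $(i)$ via the cited biconditionals), whereas you organize it as a star with $(i)$ as the hub; the underlying logic and the set of cited results are identical.
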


\begin{proof}
$(i)\Longrightarrow (ii)$ It follows from Theorem \ref{lop}.

$(ii)\Longrightarrow (iii)$ It follows from Theorem \ref{lop} and Lemma \ref%
{ki}.

$(iii)\Longrightarrow (iv)$ It follows from Lemmas \ref{ki} and \ref{li}.

$(iv)\Longrightarrow (v)$ It follows from Lemmas \ref{li} and \ref{lft}.

$(v)\Longrightarrow (vi)$ It follows from Lemma \ref{lft} and Theorem \ref%
{io}.

$(vi)\Longrightarrow (i)$ It follows from Theorem \ref{io}.
\end{proof}

An AG-groupoid is called an AG$^{\ast }$-groupoid if the following holds for
all $a,b,c\in S$%
\begin{equation*}
(ab)c=b(ac).
\end{equation*}

In an AG$^{\ast }$-groupoid $S$ the following law holds \cite{ref10}%
\begin{equation}
(x_{1}x_{2})(x_{3}x_{4})=(x_{p(1)}x_{p(2)})(x_{P(3)}x_{P(4)})\text{,} 
\tag{6}
\end{equation}%
where $\{p(1),p(2),p(3),p(4)\}$ means any permutation on the set $%
\{1,2,3,4\} $. It is an easy consequence that if $S=S^{2}$, then $S$ becomes
a commutative semigroup.

\begin{theorem}
Regular $($weakly regular, intra-regular, right regular, left regular, left
quasi regular and completely regular$)$ AG$^{\ast }$-groupoids becomes
semigroups.
\end{theorem}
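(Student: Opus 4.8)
The plan is to reduce the statement to Lemma~\ref{ss} together with the permutation identity $(6)$, which holds in every AG$^{\ast}$-groupoid. The key observation is that each of the seven notions of regularity in the statement forces $S=S^{2}$, and in an AG$^{\ast}$-groupoid the condition $S=S^{2}$ already collapses the structure to a commutative semigroup (the ``easy consequence'' recorded just before the statement). So the real content is entirely in that remark, and the proof only has to assemble it for each case.

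First I would invoke Lemma~\ref{ss}: if $S$ is regular (weakly regular, intra-regular, right regular, left regular, left quasi regular, or completely regular), then $S=S^{2}$. Thus it suffices to show that an AG$^{\ast}$-groupoid with $S=S^{2}$ is a semigroup, and this single argument handles all seven cases at once. Next I would prove commutativity: given $a,b\in S$, use $S=S^{2}$ to write $a=a_{1}a_{2}$ and $b=b_{1}b_{2}$ with $a_{i},b_{i}\in S$, and apply $(6)$ with the permutation interchanging $\{1,2\}$ and $\{3,4\}$ to get $ab=(a_{1}a_{2})(b_{1}b_{2})=(b_{1}b_{2})(a_{1}a_{2})=ba$. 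Hence $S$ is commutative.

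Finally I would derive associativity from the left invertive law $(1)$ and commutativity: for all $a,b,c\in S$ one has $(ab)c=(cb)a=(bc)a=a(bc)$, so $S$ is a (commutative) semigroup. Combining this with Lemma~\ref{ss} gives the claim in every listed case. The only point requiring a moment's care is making sure Lemma~\ref{ss} really does cover all seven regularity conditions named in the statement, which it does; past that there is no genuine obstacle, since the implication ``$S=S^{2}\Rightarrow S$ a commutative semigroup'' for AG$^{\ast}$-groupoids is exactly the consequence of $(6)$ already noted in the text.
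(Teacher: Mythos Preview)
Your proposal is correct and follows exactly the route the paper takes: invoke Lemma~\ref{ss} to obtain $S=S^{2}$, then use the permutation identity~$(6)$ (and the left invertive law) to deduce that $S$ is a commutative semigroup. You have simply unpacked the ``easy consequence'' the paper states without proof, so there is no substantive difference.
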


\begin{proof}
It follows from $(6)$ and Lemma \ref{ss}.
\end{proof}

\end{document}